\theoremstyle{plain}
\newtheorem{thm}{Theorem}[section]
\newtheorem{lemma}[thm]{Lemma}
\newtheorem{prop}[thm]{Proposition}
\newtheorem{cor}[thm]{Corollary}
\theoremstyle{definition}
\def\Z{\mathbb{Z}}
\def\N{\mathbb{N}}
\providecommand{\abs}[1]{\lvert#1\rvert}
\title{Right Nilpotency of Braces of Cardinality $p^4$}
\author{Dora  Pulji\'{c}\thanks{School of Mathematics, JCMB, The King's Buildings, University of Edinburgh, EH9 3BF, d.puljic@sms.ed.ac.uk} }
\begin{document}

\maketitle
\begin{abstract}

We determine right nilpotency of braces of cardinality $p^4$. If a brace of cardinality $p^4$ has an abelian multiplicative group, then it is left and right nilpotent, so we only consider braces with non-abelian multiplicative groups. We show right nilpotency in all cases using the sufficient condition of $A\ast c=0$ for some central element $c$ of a brace $A$.

\end{abstract}

\section{Introduction}

A brace is a triple $(A, +,\circ)$ where $(A,+)$ is an abelian group, $(A,\circ)$ is a group and 
\[a\circ(b+c)+a=a\circ b+a\circ c\]
for all $a,b,c\in A$. We  refer to $(A,\circ)$ as the multiplicative group of the brace.

Braces were introduced by W. Rump in 2007 \cite{rump2007braces}, as a generalisation of Jacobson radical rings, in order to help study  solutions of the Yang-Baxter equation. Braces have been studied extensively since - connections to many concepts such as integral group rings \cite{sysak2012adjoint}, Garside groups \cite{chouraqui2010garside}, groups with bijective 1-cocycles \cite{cedo2010involutive,etingof1999set}, quantum groups \cite{etingof1999set,doikou2019braces} and trusses \cite{brzezinski2019trusses} have been found, to name a few. 
In 2016, skew braces were introduced by L. Guarnieri and L. Vendramin in \cite{Guarnieri_2016} as a generalisation of braces in order to study non-involutive solutions to the Yang-Baxter equation. They were further studied in \cite{smoktunowicz2018skew,bachiller2018solutions}, for example.

Advancements in the classification of braces have been made: cyclic braces were classified in \cite{rump2007classification, rump2019classification}, braces of cardinality $pq$ have been classified in \cite{acri2020skew} and of cardinality $p^2q$ in \cite{dietzel2021braces}, skew braces of cardinality $p^3$ have been described in \cite{nejabati2018hopf}, and all not right nilpotent $\mathbb F_{p}$-braces of cardinality $p^{4}$ were described in \cite{puljic2021braces}. 

The importance of right nilpotency in braces comes from their associated set-theoretic solutions having a finite multipermutation level. This class of solutions, and therefore right nilpotent braces, is well understood and investigated \cite{gateva2004combinatorial, gateva2012multipermutation, cedo2014braces, bachiller2017family, cedo2017yang}.

In this paper we determine right nilpotency of braces of cardinality $p^4$ and show that the only not right nilpotent braces of cardinality $p^4$ are the ones constructed in \cite{puljic2021braces}. Braces of cardinality $p^4$ with an abelian multiplicative group are left and right nilpotent \cite{cedo2014braces}, so we will only consider braces with non-abelian multiplicative groups. A result of Bachiller's \cite[Theorem 2.5]{bachiller2016counterexample} states that for a brace of order $p^4$ for $p>5$ with an abelian additive group, the orders of elements in the additive and multiplicative groups coincide. Hence, if the the additive group is $C_{p^4}$, then the multiplicative group contains an element of order $p^4$ and is cyclic as well. Braces with additive group $C_p\times C_p\times C_p\times C_p$ were considered in \cite{puljic2021braces}, so braces with additive groups $C_{p^2}\times C_{p^2}$ and $C_p\times C_{p^3}$ remain to be studied.

\section{Right Nilpotent Braces}

A  \textbf{brace} is a triple $(A, +,\circ)$ where $(A,+)$ is an abelian group, $(A,\circ)$ is a group and 
\[a\circ(b+c)+a=a\circ b+a\circ c\]
for all $a,b,c\in A$. The star operation  $\ast$ is  defined  as \begin{equation*}
\label{star}
    a \circ b = a \ast b + a + b.
\end{equation*}
Then, equivalently, a brace is a triple $(A, +,\ast)$ where $(A,+)$ is an abelian group, $(A,\ast)$ is a group and 
\[a\ast(b+c)=a\ast b+a\ast c\]
for all $a,b,c\in A$. We will refer to $(A,\circ)$ as the multiplicative group of the brace.

A brace is \textbf{left nilpotent} if there exists $n\in\N$ such that $A^{n}=0$, where $A^{i+1}=A\ast A^i$ and $A^1=A$. A brace is \textbf{right nilpotent} if there exists $n\in\N$ such that $A^{(n)}=0$, where $A^{(i+1)}=A^{(i)}\ast A$ and $A^{(1)}=A$. A brace is \textbf{strongly nilpotent} if there exists $n\in\N$ such that $A^{[n]}=0$, where $A^{[i+1]}=\sum_{j=1}^{i}A^{[j]}*A^{[i+1-j]}$ and $A^{[1]}=A$.

For a brace $A$ we will denote the set of elements $c\in A$ such that $c\ast a=a\ast c$ for all $a\in A$ by $Z(A)$, and we will call such elements central.
We will use the following throughout: 
By \cite{puljic2021braces} if a central element $c$ of a brace $A$ is such that $A\ast c=0$, then $c\ast A=0$.  Therefore $c$ generates an ideal $I$ in $A$ such that $I\ast A = A\ast I=0$. By a result of Bachiller's \cite{bachiller2016counterexample}, this implies that $A/I$ is right nilpotent as it is a brace of cardinality $p^3$ or less. Hence $A$ is right nilpotent. Therefore, to show the brace $A$ is right nilpotent it suffices to show that $A\ast c=0$ for some $c\in Z(A)$.

We begin with a theorem.

\begin{thm}\label{thm}
Let $A$ be a brace of cardinality $p^4$ with $(A,+)$ either $C_p\times C_{p^3}$ or $ C_{p^2}\times C_{p^2}$. If $(A,+) \cong C_p\times C_{p^3}$, then we let $m=2$, and if $(A,+) \cong  C_{p^2}\times C_{p^2}$, then we let $m=1$.  Suppose $(A,\circ)$ is generated by $\{P,Q_1,Q_2,\dots, Q_i \}$ for some $i$. We assume the following:
\begin{enumerate}
    \item $P^{p^m}$ is central,
    \item  $P^{p^m}\in A^2$,
    \item $ord(Q_i)\leq p^m$,
    \item we can write any $a\in A$ as $P^{k}\circ\prod_{\circ}Q_j^{a_j}$, where $k,a_i\in \Z$ and the $Q_j$ in the product appear in any order.
\end{enumerate}
Then $P\ast p^m P^k =0$ for all $k\in \Z$.
\end{thm}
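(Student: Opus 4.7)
The plan is to induct on $k$, reducing the whole theorem to the single base case $k=1$. Using $P^{k+1} = P \circ P^k = P \ast P^k + P + P^k$ together with additivity of $\ast$ in the second argument, multiplying by $p^m$ gives
\[
  p^m P^{k+1} \;=\; p^m P + p^m P^k + P \ast (p^m P^k).
\]
Under the inductive hypothesis $P \ast (p^m P^k) = 0$, this collapses to $p^m P^{k+1} = p^m P + p^m P^k$; applying $P \ast$ once more and using additivity again yields
\[
  P \ast (p^m P^{k+1}) \;=\; P \ast (p^m P) + P \ast (p^m P^k) \;=\; P \ast (p^m P).
\]
Since the case $k = 0$ is immediate (as $P^0 = 0$), the entire theorem reduces to establishing $P \ast (p^m P) = 0$, equivalently $p^m(P \ast P) = 0$.

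Next I would observe that $p^m P$ lies in $A^2$. A routine induction on $j$, using $P^{j+1} = P \ast P^j + P + P^j$, gives $P^j - jP \in A^2$ for all $j \ge 1$. Specializing $j = p^m$ and combining with hypothesis (2), which puts $P^{p^m}$ into $A^2$, yields $p^m P = P^{p^m} - (P^{p^m} - p^m P) \in A^2$.

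The main obstacle is the remaining step: showing that this specific element $p^m P$ of $A^2$ is fixed by the additive automorphism $\lambda_P$ of $(A,+)$ defined by $\lambda_P(a) = P \ast a + a$. My approach would be to use the normal form from hypothesis (4) to write $p^m P = P^{k_0} \circ \prod_j Q_j^{c_j}$ for some integers, then expand $P \ast (p^m P)$ using the identity $a \ast (b \circ c) = a \ast (b \ast c) + a \ast b + a \ast c$ (an immediate consequence of $\ast$-additivity in the second slot) and simplify the resulting star products via the cocycle identity $(a \circ b) \ast c = a \ast (b \ast c) + a \ast c + b \ast c$, the centrality of $P^{p^m}$ from (1) (which yields $Q_j \ast P^{p^m} = P^{p^m} \ast Q_j$), and the order bound $\mathrm{ord}(Q_j) \le p^m$ from (3). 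Tracking the cancellations modulo $p^m$ is the delicate part, and I expect it is where all four hypotheses come together.
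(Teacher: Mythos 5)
Your reduction of the theorem to the single identity $P\ast(p^mP)=0$ is correct and clean: the induction $p^mP^{k+1}=P\ast(p^mP^k)+p^mP+p^mP^k$ does collapse everything to the case $k=1$ (for negative $k$ you would additionally need injectivity of $\lambda_P$ or a separate downward step, but that is minor), and the observation that $p^mP=P^{p^m}-\sum_{j=1}^{p^m-1}P\ast P^j\in A^2$ is also correct. However, the proposal stops exactly where the theorem actually lives. Proving $p^m(P\ast P)=0$ is the entire content of the result, and "write $p^mP$ in normal form, expand, and track the cancellations modulo $p^m$" is a plan, not an argument; nothing in the sketch identifies which cancellations occur or why the four hypotheses force them. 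In particular, knowing $p^mP\in A^2$ only yields $p^m(P\ast P)=P\ast(p^mP)\in A^3$, which is far from zero.

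For comparison, the paper's route to $p^m(P\ast P)=0$ requires a chain of intermediate facts, none of which appear in your sketch: (i) the closed formula $P\ast P^n=\frac{1}{6}(n-2)(n-1)n\,P\ast(P\ast(P\ast P))+\frac{1}{2}n(n-1)\,P\ast(P\ast P)+n(P\ast P)$ (valid because $A^5=0$); (ii) from $P^{p^m}\in A^2$, the containment $p^m(P\ast P)\in A^3$; (iii) the identity $P^{p^m}=p^mP$, proved by writing the element $a$ with $P^{p^m}=p^ma$ in the normal form of hypothesis (4) and comparing $p^mP\ast(P\ast P^k)$ with $kp^mP\ast(P\ast P)$; (iv) the consequence $p^mP\ast(P\ast P)=0$; (v) the identity $p^mP^{-1}=-p^mP$; and finally (vi) the two evaluations of $p^m(P\ast P^{p^m-1})$, once as $-p^m(P\ast P)$ via the closed formula and once as $p^m(P\ast P^{-1})=-p^mP-p^mP^{-1}=0$ via $P^{p^m-1}=P^{p^m}\circ P^{-1}$. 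Steps (iii)--(vi) are where hypotheses (1), (3) and (4) are genuinely used, and they do not emerge from a generic expansion of $P\ast(P^{k_0}\circ\prod Q_j^{c_j})$; for instance the crucial cancellation in (vi) hinges on the specific relations $P^{p^m}=p^mP$ and $p^mP^{-1}=-p^mP$. As it stands the proposal establishes a useful reformulation of the goal but leaves the main step unproved.
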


We will apply this theorem in the following sections to specific cases of braces to show that we can find a central element, namely $P^{p^m}$, for which $A\ast P^{p^m}=0$. The goal of this section is to prove this theorem. We now assume the assumptions of the theorem.

\begin{prop}\label{PPn}
$P\ast P^n = \frac{1}{6}(n-2)(n-1)n P\ast (P\ast (P\ast P))+ \frac{1}{2}n(n-1) P\ast (P\ast P)+ n (P\ast P).$
\end{prop}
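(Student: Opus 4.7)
The approach is to first expand the $\circ$-power $P^n$ additively in terms of iterated left $\ast$-powers of $P$, and then apply $P\ast(-)$ using the brace identity $a\ast(b+c)=a\ast b+a\ast c$. Abbreviate $L_1:=P$ and $L_{k+1}:=P\ast L_k$, so $L_k\in A^k$.

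The main step is to establish by induction on $n\geq 1$ the identity
\[
P^n \;=\; \sum_{k\geq 1}\binom{n}{k}\,L_k \qquad \text{in } (A,+).
\]
The base case $n=1$ is immediate. For the inductive step, use $P^{n+1}=P\circ P^n = P\ast P^n + P + P^n$; left-distributivity of $\ast$ in its second argument gives $P\ast P^n=\sum_{k}\binom{n}{k}L_{k+1}$, and after reindexing and combining with $P+P^n$ via Pascal's identity $\binom{n}{k-1}+\binom{n}{k}=\binom{n+1}{k}$, the expansion at $n+1$ follows. Applying $P\ast(-)$ once more and using left-distributivity then yields
\[
P\ast P^n \;=\; \sum_{k\geq 1}\binom{n}{k}\,L_{k+1}.
\]

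It remains to truncate this sum at $k=3$. Since $|A|=p^4$ and $(A,\circ)$ is a finite $p$-group and therefore nilpotent, standard results on finite $p$-braces imply that $A$ is left nilpotent; the descending chain $A\supseteq A^2\supseteq A^3\supseteq\cdots$ of additive subgroups must strictly decrease until it reaches $0$, and each strict drop loses at least a factor of $p$, so $A^5=0$. Therefore $L_{k+1}\in A^{k+1}=0$ for every $k\geq 4$, leaving only the three terms $n\,L_2+\binom{n}{2}L_3+\binom{n}{3}L_4$, which upon expanding the binomial coefficients is exactly the stated formula. The main obstacle is precisely this truncation: the induction itself is a mechanical Pascal-identity calculation, but discarding the tail $\sum_{k\geq 4}\binom{n}{k}L_{k+1}$ hinges on left nilpotency of $A$ together with the counting estimate along the descending chain.
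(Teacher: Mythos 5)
Your proof is correct and follows essentially the same route as the paper: an induction on $n$ driven by $P^{n+1}=P\circ P^n$, the identity $P\ast P^{n+1}=P\ast(P\ast P^n)+P\ast P+P\ast P^n$, and left distributivity of $\ast$, with the tail killed by $A^5=0$. The only real difference is cosmetic --- you first establish the full binomial expansion $P^n=\sum_k\binom{n}{k}L_k$ and then apply $P\ast(-)$, and you justify the truncation $A^5=0$ explicitly (left nilpotency of finite $p$-braces plus the strictly descending chain $A\supsetneq A^2\supsetneq\cdots$), a step the paper also relies on but leaves implicit when it silently drops the degree-five term in its inductive step.
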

\begin{proof}
We prove the proposition by induction. The base case clearly holds. Now, we have
\begin{align*}
    P\ast P^{n+1} &= P\ast (P\circ P^n) = P\ast (P\ast P^n)+P\ast P +P\ast P^n\\
    &= \frac{1}{2}n(n-1)P\ast( P\ast (P\ast P))+ n P\ast(P\ast P)+P\ast P\\
    &\;\;\;+\frac{1}{6}(n-2)(n-1)n P\ast (P\ast (P\ast P))+ \frac{1}{2}n(n-1) P\ast (P\ast P) \\
    &\;\;\;+n( P\ast P)\\
    &=\frac{1}{6}(n-1)n(n+1) P\ast (P\ast (P\ast P))+ \frac{1}{2}n(n+1) P\ast (P\ast P)\\
    &\;\;\;+(n+1) P\ast P.
\end{align*}
\end{proof}

\begin{prop}\label{prop1}
$p^m(P\ast P)\in A^3$ and $P\ast P^{p^m} = \frac{1}{2}p^m(p^m-1) P\ast (P\ast P)+ p^m (P\ast P).$
\end{prop}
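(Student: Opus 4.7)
The approach is to substitute $n=p^m$ into Proposition~\ref{PPn} and exploit two facts: assumption~(2), which gives $P^{p^m}\in A^2$, and that a finite $p$-brace of cardinality $p^4$ is left nilpotent, so the chain $A\supsetneq A^2\supsetneq A^3\supsetneq\cdots$ can contain at most four strict inclusions before reaching $0$; in particular $A^5=0$.

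For the first claim, I would rearrange Proposition~\ref{PPn} at $n=p^m$ to solve for $p^m(P\ast P)$:
\[
p^m(P\ast P) = P\ast P^{p^m} - \tfrac{1}{6}(p^m-2)(p^m-1)p^m\,P\ast\!\bigl(P\ast(P\ast P)\bigr) - \tfrac{1}{2}p^m(p^m-1)\,P\ast(P\ast P).
\]
Each summand on the right lies in $A^3$: since $P^{p^m}\in A^2$ we have $P\ast P^{p^m}\in A\ast A^2 = A^3$; the second summand lies in $A^4\subseteq A^3$; and the third is in $A^3$ directly. Hence $p^m(P\ast P)\in A^3$.

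For the second claim it suffices to show that the cubic summand in Proposition~\ref{PPn} vanishes at $n=p^m$. Starting from $p^m(P\ast P)\in A^3$ and using that $\ast$ is additively distributive in the second argument, I would apply $P\ast{-}$ twice to obtain
\[
p^m\,P\ast(P\ast P) = P\ast\!\bigl(p^m(P\ast P)\bigr)\in A\ast A^3 = A^4,
\]
\[
p^m\,P\ast\!\bigl(P\ast(P\ast P)\bigr) = P\ast\!\bigl(p^m\,P\ast(P\ast P)\bigr)\in A\ast A^4 = A^5 = 0.
\]
The cubic coefficient is $\binom{p^m}{3}=\tfrac{p^m(p^m-1)(p^m-2)}{6}$, which for $p>3$ is $p^m$ times an integer (one of the three consecutive integers $p^m-2,p^m-1,p^m$ is divisible by $3$ and $(p^m-1)(p^m-2)$ is automatically even), so $\binom{p^m}{3}\,P\ast(P\ast(P\ast P))=0$ and the stated formula for $P\ast P^{p^m}$ follows. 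The main subtle point is this last step: we have only $p^m\,P\ast(P\ast(P\ast P))=0$, which is strictly weaker than the vanishing of $P\ast(P\ast(P\ast P))$, so one must factor $p^m$ cleanly out of $\binom{p^m}{3}$ in $\Z$; this works for the primes in the regime of the paper but would fail at $p=3$.
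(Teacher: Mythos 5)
Your proposal is correct and is essentially the paper's own argument, just with the implicit steps spelled out: the paper likewise reads off $p^m(P\ast P)\in A^3$ from Proposition \ref{PPn} using $P\ast P^{p^m}\in A\ast A^2$, and then kills the cubic term via $p^m\,P\ast(P\ast(P\ast P))\in A^5=0$ together with the integrality of $\binom{p^m}{3}/p^m$. Your remark about the $p=3$ obstruction is consistent with the paper's standing assumption (inherited from Bachiller's theorem) that $p>5$.
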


\begin{proof}
By Proposition \ref{PPn} we have
\begin{align*}
    P\ast P^{p^m} &=\frac{1}{6}(p^m-2)(p^m-1)p^m P\ast (P\ast (P\ast P))+ \frac{1}{2}p^m(p^m-1) P\ast (P\ast P)\\
    &\;\;\;+ p^m (P\ast P), 
\end{align*}
which implies that $p^m(P\ast P)\in A^3. $ This in turn implies that 
\[P\ast P^{p^m} = \frac{1}{2}p^m(p^m-1) P\ast (P\ast P)+ p^m (P\ast P). \]
\end{proof}

\begin{cor}\label{cor1}
$p^m P\ast(P\ast P) = P\ast(P\ast P^{p^m}).$
\end{cor}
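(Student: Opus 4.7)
The plan is to apply the operator $P \ast (-)$ to the identity in Proposition \ref{prop1} and then argue that the extra summand thus produced vanishes, by the same mechanism already used inside the proof of Proposition \ref{prop1}.

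Concretely, starting from $P \ast P^{p^m} = \tfrac{1}{2}p^m(p^m-1)\,P \ast (P \ast P) + p^m(P \ast P)$ and applying $P \ast (-)$, the brace axiom $a \ast (b+c) = a \ast b + a \ast c$ makes $P \ast (-)$ additive, hence $\Z$-linear, in its second argument. This gives
\[ P \ast (P \ast P^{p^m}) \;=\; \tfrac{1}{2}p^m(p^m-1)\,P \ast (P \ast (P \ast P)) \;+\; p^m\,P \ast (P \ast P). \]
So the corollary is equivalent to the vanishing of $\tfrac{1}{2}p^m(p^m-1)\,P \ast (P \ast (P \ast P))$.

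To handle this term, I would factor $\tfrac{1}{2}p^m(p^m-1) = \tfrac{p^m-1}{2}\cdot p^m$ (an integer since $p$ is odd) and pull the factor of $p^m$ through the two copies of $P \ast (-)$ using linearity, obtaining $\tfrac{p^m-1}{2}\,P \ast (P \ast (p^m(P \ast P)))$. By Proposition \ref{prop1}, $p^m(P \ast P) \in A^3$, so this element lies in $A \ast (A \ast A^3) \subseteq A^5$.

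The main obstacle, and the only non-computational ingredient, is therefore to conclude $A^5 = 0$. I would invoke the theorem that any brace of $p$-power order is left nilpotent (Smoktunowicz), together with the routine observation that the chain $A \supseteq A^2 \supseteq \cdots$ cannot stabilize at a nonzero term; hence each strict step drops the order by at least a factor of $p$, and for $|A| = p^4$ this forces $A^5 = 0$. This is precisely the mechanism underlying the final step of the proof of Proposition \ref{prop1}. Substituting back, the extra summand vanishes and Corollary \ref{cor1} follows.
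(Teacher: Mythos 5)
Your proof is correct and is essentially the argument the paper leaves implicit (the corollary is stated without proof): apply $P\ast(-)$ to Proposition \ref{prop1} and kill the term $\frac{1}{2}p^m(p^m-1)\,P\ast(P\ast(P\ast P))$ by writing it as $\frac{p^m-1}{2}\,P\ast(P\ast(p^m(P\ast P)))\in A^5=0$, exactly the mechanism already used to drop the cubic term in Proposition \ref{prop1}. The only quibble is attribution: left nilpotency of braces of $p$-power order is due to Rump (Smoktunowicz's result concerns skew braces), but this does not affect the argument.
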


\begin{prop}\label{prop2}
$p^m P\ast(P\ast P^n)=np^m P\ast (P\ast P)=0$ for all $n\in\Z$ and $P^{p^m} = p^m P$.
\end{prop}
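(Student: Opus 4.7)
The plan is to apply $P \ast (\cdot)$ to the expansion in Proposition \ref{PPn}, multiply by $p^m$, and then systematically annihilate the resulting higher-order terms by combining Proposition \ref{prop1} with Corollary \ref{cor1}.

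Using left-distributivity and the scalar rule $a \ast (nb) = n(a\ast b)$, applying $P\ast(\cdot)$ to Proposition \ref{PPn} yields
\begin{align*}
P \ast (P \ast P^n) &= \tfrac16(n-2)(n-1)n \, P\ast(P\ast(P\ast(P\ast P)))\\
&\quad + \tfrac12 n(n-1) \, P\ast(P\ast(P\ast P)) + n \, P\ast(P\ast P).
\end{align*}
Multiplying by $p^m$, the first claimed equality reduces to verifying that the two higher-order summands are annihilated by $p^m$.

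For the middle summand, substituting Proposition \ref{prop1} into Corollary \ref{cor1} gives
\[p^m P\ast(P\ast P) = P\ast(P\ast P^{p^m}) = \tfrac12 p^m(p^m-1) P\ast(P\ast(P\ast P)) + p^m P\ast(P\ast P),\]
whence $\tfrac12 p^m(p^m-1) P\ast(P\ast(P\ast P)) = 0$. Since $(p^m-1)/2$ is coprime to $p$ and the additive group of $A$ is a finite $p$-group, multiplication by $(p^m-1)/2$ is invertible on any $p$-torsion element, so $p^m P\ast(P\ast(P\ast P)) = 0$. Applying $P\ast(\cdot)$ once more and invoking the scalar rule propagates this to $p^m P\ast(P\ast(P\ast(P\ast P))) = 0$, which handles the leading summand and completes the proof of the first equality.

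For the second chain of equalities, equivalently $p^m P\ast(P\ast P) = 0$, I would exploit the two as-yet-unused hypotheses of the theorem: that $P^{p^m}$ is central and that $P^{p^m} \in A^2$. Expanding $P^{p^m}$ as a $\Z$-linear combination of elements $a \ast b$ and combining with Corollary \ref{cor1}, the vanishing identities just obtained should force $P\ast(P\ast P^{p^m})$ into a deep enough member of the left-series to collapse to zero. Once $p^m P\ast(P\ast P) = 0$ is in hand, the identity $P^{p^m} = p^m P$ follows from the iterated expansion $P^{p^m} = \sum_{k=1}^{p^m}\binom{p^m}{k} P^{\ast k}$ (derived by induction from $a \circ b = a \ast b + a + b$): for $p \geq 5$ every $\binom{p^m}{k}$ with $2 \leq k < p^m$ is divisible by $p^m$, and the preceding vanishing results then kill each correction term $\binom{p^m}{k} P^{\ast k}$.

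The main obstacle is precisely the passage $p^m P\ast(P\ast(P\ast P)) = 0 \Longrightarrow p^m P\ast(P\ast P) = 0$. The naive substitution into Corollary \ref{cor1} returns the tautology $p^m P\ast(P\ast P) = p^m P\ast(P\ast P)$, so this step cannot come from further direct manipulation of Propositions \ref{PPn} and \ref{prop1} alone; it genuinely requires the centrality of $P^{p^m}$ and its explicit representation as an element of $A^2$, together with a careful analysis of how $P^{p^m}$ decomposes through star products.
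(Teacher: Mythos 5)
Your treatment of the first equality is sound: extracting $\tfrac12 p^m(p^m-1)\,P\ast(P\ast(P\ast P))=0$ by playing Proposition \ref{prop1} off against Corollary \ref{cor1}, dividing out the unit $\tfrac12(p^m-1)$ in the $p$-group $(A,+)$, and propagating the vanishing one level deeper does give $p^m P\ast(P\ast P^n)=np^m P\ast(P\ast P)$, essentially as in the paper.

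The rest is a genuine gap, and you have correctly located it yourself: you never prove $p^m P\ast(P\ast P)=0$, and without it neither the ``$=0$'' in the statement nor $P^{p^m}=p^m P$ follows. Two concrete problems. First, your fallback route to $P^{p^m}=p^m P$ via $P^{p^m}=\sum_k\binom{p^m}{k}P^{\ast k}$ fails: $\binom{p^m}{k}$ is \emph{not} divisible by $p^m$ for all $2\le k<p^m$ (for $m=2$ one has $v_p\bigl(\binom{p^2}{p}\bigr)=1$), and even where the divisibility holds, killing the $k=2$ term $\tfrac12(p^m-1)\cdot p^m(P\ast P)$ would require $p^m(P\ast P)=0$, whereas all you have is $p^m(P\ast P)\in A^3$ and $p^m\bigl(P\ast(P\ast P)\bigr)=0$; these are different elements. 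Second, the hypotheses you flag as ``as-yet-unused'' are not the ones that close the argument: what the paper actually uses here are hypotheses (3) and (4) of Theorem \ref{thm}. Since $P^{p^m}\in p^mA$, write $P^{p^m}=p^ma$ and expand $a=P^{k}\circ\prod_{\circ}Q_i^{a_i}$; because $p^mQ_i=0$ and $\ast$ is left-distributive, multiplying by $p^m$ annihilates every term involving a $Q_i$, so $P^{p^m}=p^mP^{k}$. The already-established first equality then gives $p^mP\ast(P\ast P)=p^mP\ast(P\ast P^{k})=kp^mP\ast(P\ast P)$, forcing $k=1$, and finally comparing $P\ast P^{p^m}=P\ast(p^mP)=p^m(P\ast P)$ with the formula of Proposition \ref{prop1} yields $\tfrac12p^m(p^m-1)P\ast(P\ast P)=0$, hence $p^mP\ast(P\ast P)=0$. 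This decomposition through the generators $Q_i$ is the missing ingredient; centrality of $P^{p^m}$ and $P^{p^m}\in A^2$ alone will not produce it.
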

\begin{proof}
As before we have
\begin{align*}
    P\ast P^{n} &=\frac{1}{6}(n-2)(n-1)n P\ast (P\ast (P\ast P))+ \frac{1}{2}n(n-1) P\ast (P\ast P)\\
    &\;\;\;+ n (P\ast P), 
\end{align*}
which implies
\[p^m P\ast(P\ast P^n)=np^m P\ast (P\ast P).\]
Note that $P^{p^m}\in p^m A.$ Let $a\in A$ so that $P^{p^m}=p^m a$. Then we can write $a=P^{k} \circ \prod_{\circ}Q_i^{a_i}$, where  $k,a_i\in \Z$ and  the $Q_i$ in the product appear in some order. Then we have
\begin{align*}
    a &= P^{k}\ast (\prod_{\circ}Q_i^{a_i})+P^{k}+ \prod_{\circ}Q_i^{a_i}\\
    &=P^{k}\ast (\sum \prod_{\ast}Q_i^{a_i})+P^{k}+ \sum \prod_{\ast}Q_i^{a_i},
\end{align*}
where $\sum \prod_{\ast}Q_i^{a_i}$ signifies some sum of $\ast$-products of $Q_i$. Hence
\[P^{p^m}=p^m a = p^m P^{k}.\]
We have
\begin{align*}
    p^m P\ast (P\ast P) = p^m P\ast (P\ast P^{k}) = p^m k P\ast(P\ast P).
\end{align*}
This implies $k=1$ and $P^{p^m} = p^m P$.We have
\[P\ast P^{p^m} =  \frac{1}{2}p^m(p^m-1) P\ast (P\ast P)+ p^m (P\ast P),\]
so
\[\frac{1}{2}p^m(p^m-1) P\ast (P\ast P)=0.\]
As $\frac{1}{2}(p^m-1)$ is an integer which is not divisible by $p$, it follows that $p^mP\ast (P\ast P)=0.$
\end{proof}

\begin{cor}\label{np2pp}
$p^m(P\ast P^n) = np^m(P\ast P).$
\end{cor}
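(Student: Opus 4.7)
The plan is to apply Proposition \ref{PPn} directly, multiply the resulting identity by $p^m$, and show that the two higher-order terms on the right-hand side vanish, leaving only the claimed $n\,p^m(P\ast P)$.

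Concretely, I would begin by writing
\[
p^m(P\ast P^n) \;=\; \tfrac{1}{6}(n-2)(n-1)n \cdot p^m P\ast(P\ast(P\ast P)) \;+\; \tfrac{1}{2}n(n-1)\cdot p^m P\ast(P\ast P) \;+\; n\cdot p^m (P\ast P),
\]
using Proposition \ref{PPn}. The middle term is killed immediately by Proposition \ref{prop2}, which gives $p^m P\ast(P\ast P)=0$.

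The one step that requires a small extra observation is handling the cubic term. Here I would use that $\ast$ is left-distributive over $+$ in the second argument, so integer scalars pass into the right slot: $P\ast(kx) = k\,(P\ast x)$ for every $k\in\Z$ and $x\in A$. Taking $k=p^m$ and $x=P\ast(P\ast P)$ gives
\[
p^m P\ast(P\ast(P\ast P)) \;=\; P\ast\bigl(p^m\, P\ast(P\ast P)\bigr) \;=\; P\ast 0 \;=\; 0,
\]
once again by Proposition \ref{prop2}. Substituting back leaves exactly $p^m(P\ast P^n) = n\,p^m(P\ast P)$, as required.

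I do not anticipate any real obstacle: the corollary is essentially a bookkeeping consequence of Proposition \ref{PPn} combined with the vanishing established in Proposition \ref{prop2}. The only subtlety worth flagging is the manoeuvre of pushing the scalar $p^m$ inside the outer $P\ast(\cdot)$ to reduce the cubic term to the already-proven quadratic case; everything else is a direct substitution.
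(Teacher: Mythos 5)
Your argument is correct and is exactly the intended derivation: the paper states this corollary without proof, and the obvious route is the one you take, namely multiplying the identity of Proposition \ref{PPn} by $p^m$ and killing the quadratic and cubic terms via $p^m P\ast(P\ast P)=0$ from Proposition \ref{prop2}, using left-distributivity to push the scalar into the second slot. Your explicit handling of the cubic term via $p^m\bigl(P\ast(P\ast(P\ast P))\bigr)=P\ast\bigl(p^m P\ast(P\ast P)\bigr)=0$ is a worthwhile detail that the paper leaves implicit.
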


\begin{prop}
$p^mP^{-1}=-p^m P.$
\end{prop}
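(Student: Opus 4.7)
The plan is to compute $(P^{-1})^{p^m}$ in two ways and equate the results.

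First, $(P^{-1})^{p^m} = P^{-p^m} = (P^{p^m})^{-1}$, and I would evaluate this $\circ$-inverse directly. By Proposition~\ref{prop2}, $P^{p^m} = p^m P$, and by hypothesis this element is central. Centrality together with the linearity of $\ast$ in its second argument yields
\[
(p^m P)\ast(p^m P) \;=\; p^m\!\left((p^m P)\ast P\right) \;=\; p^m\!\left(P\ast(p^m P)\right) \;=\; p^{2m}(P\ast P).
\]
In both of the cases handled by the theorem, the additive exponent of $(A,+)$ divides $p^{2m}$: it equals $p^{2m}$ itself when $(A,+)\cong C_{p^2}\times C_{p^2}$ (so $m=1$), and equals $p^3\mid p^4 = p^{2m}$ when $(A,+)\cong C_p\times C_{p^3}$ (so $m=2$). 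Thus the right-hand side vanishes, and
\[
(p^m P)\circ(-p^m P) \;=\; -(p^m P)\ast(p^m P) \;=\; 0,
\]
giving $P^{-p^m} = -p^m P$.

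Second, I would observe that the hypotheses of Theorem~\ref{thm} are preserved under the interchange $P \leftrightarrow P^{-1}$: the element $(P^{-1})^{p^m} = P^{-p^m} = -p^m P$ is central (as the $\circ$-inverse of a central element) and lies in $A^2$ (since $P^{p^m}\in A^2$ and $A^2$ is a subgroup of $(A,+)$); hypothesis~(3) is unchanged; and hypothesis~(4) is invariant under $k \mapsto -k$. Consequently the proof of Proposition~\ref{prop2} applies verbatim with $P^{-1}$ in place of $P$, giving $(P^{-1})^{p^m} = p^m P^{-1}$.

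Combining the two expressions, $p^m P^{-1} = (P^{-1})^{p^m} = P^{-p^m} = -p^m P$, as required. The main obstacle will be to verify carefully that the proof of Proposition~\ref{prop2} really does transfer to $P^{-1}$; since that proof uses only the hypotheses invariant under $P \leftrightarrow P^{-1}$, this is essentially a bookkeeping exercise, but it is the substantive step of the argument.
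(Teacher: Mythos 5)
Your argument is correct, but its second half takes a genuinely different route from the paper's. The first halves essentially coincide: the paper also extracts $P^{-p^m}=-P^{p^m}$ from a vanishing star product of two elements of $p^mA$ (it computes $P^{-p^m}\ast P^{p^m}$ in two ways, using centrality and the fact that $p^{2m}$ annihilates $(A,+)$ --- exactly the ingredients of your computation of $(p^mP)\ast(p^mP)$). You diverge in how you identify $P^{-p^m}$ with $p^mP^{-1}$: the paper writes $P^{-p^m}=p^mP^{c_1}$ via the decomposition in assumption (4) of Theorem \ref{thm}, proves the auxiliary identity $(P^{\pm p^m})^n=nP^{\pm p^m}$, and then pins down $c_1=-1$ by evaluating $P\ast P^{-p^m}$ with Corollary \ref{np2pp}, whereas you invoke the symmetry $P\leftrightarrow P^{-1}$ and rerun Proposition \ref{prop2}. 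Your symmetry step does hold up: since $c\ast a=a\ast c$ is equivalent to $c\circ a=a\circ c$, the paper's $Z(A)$ is precisely the centre of $(A,\circ)$ and is therefore closed under $\circ$-inverses, $A^2$ is an additive subgroup and so contains $-P^{p^m}$, and assumptions (3) and (4) are visibly invariant; note that the paper itself silently uses centrality of $P^{-p^m}$ (``as $P^{kp^m}\in Z(A)$''), so you assume nothing it does not. The trade-off is that your route avoids the intermediate identity $(P^{p^m})^n=nP^{p^m}$ and the determination of the exponent $b_1$, at the price of the meta-claim that the proof of Proposition \ref{prop2} transfers verbatim to $P^{-1}$, which --- as you rightly flag --- still has to be checked line by line rather than cited.
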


\begin{proof}

Recall that $P^{p^m}, P^{-p^m}\in p^m A$, so we can write $P^{p^m}=p^m b$, $P^{-p^m}=p^m c$ for some $b,c\in A$. Let $b=P^{b_1} \circ \prod_{\circ}Q_i^{b_i}$, $c=P^{c_1} \circ \prod_{\circ}Q_i^{c_i}$ for some $b_i,c_i\in \Z$, so \[P^{p^m}=p^m P^{b_1}, \quad P^{-p^m}=p^m P^{c_1}.\] 
Now consider $P^{-p^m}\ast P^{p^m}$. We have
\begin{align*}
    P^{-p^m}\ast P^{p^m} &= -P^{-p^m}-P^{p^m}\\
    &=-p^mP^{c_1}-p^mP^{b_1}.
\end{align*}
Also, as $P^{kp^m}\in Z(A)$
\begin{align*}
    P^{-p^m}\ast P^{p^m} &= p^m(P^{-p^m}\ast P^{b_1})\\
    &=p^m(P^{b_1}\ast p^mP^{c_1})\\
    &=0,
\end{align*}
so $p^m P^{b_1}=-p^mP^{c_1}$. Therefore
\begin{align*}
    P^{-p^m}\ast P^{p^m} &=-(p^mP^{b_1}\ast p^mP^{b_1})\\
    &=-(p^m P^{b_1})^2+2p^mP^{b_1}\\
    &=0,
\end{align*}
so $(p^m P^{b_1})^2=2p^mP^{b_1}$, which implies $(p^m P^{b_1})^n=np^mP^{b_1}$. Hence $(P^{p^m})^n=nP^{p^m}$ and similarly $(P^{-p^m})^n=nP^{-p^m}$. It follows that $P\ast P^{-p^m} = P\ast -P^{p^m}=-p^m(P\ast P).$ Now we have
\begin{align*}
    P\ast P^{-p^m} &= P\ast (-p^m P^{b_1})\\
    &=-p^m(P\ast P^{b_1})\\
    &=-p^m b_1 (P\ast P)
\end{align*}
by Corollary \ref{np2pp}. Hence $b_1=1.$ Now we have
\begin{align*}
    P\ast P^{-p^m} &= P\ast (p^m P^{c_1})\\
    &=p^m(P\ast P^{c_1})\\
    &=p^m c_1 (P\ast P),
\end{align*}
so $c_1=-1.$

\end{proof}

\begin{lemma}
Let $a\in A$. Then $P\ast p^m a = 0.$
\end{lemma}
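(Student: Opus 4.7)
My plan is to reduce the lemma to showing $p^m(P \ast P) = 0$, from which the statement follows via the decomposition in assumption 4 and Corollary \ref{np2pp}.

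By assumption 4, I would write $a = P^k \circ q$ with $q = \prod_\circ Q_j^{a_j}$. Converting the outer $\circ$ to additive form gives $a = P^k \ast q + P^k + q$, so
\[ p^m a = P^k \ast (p^m q) + p^m P^k + p^m q. \]
I would then show $p^m q = 0$: assumption 3 provides $p^m Q_j = 0$, and an induction on $a_j$ using $Q_j^{a_j+1} = Q_j \ast Q_j^{a_j} + Q_j + Q_j^{a_j}$ together with left-distributivity of $\ast$ over $+$ gives $p^m Q_j^{a_j} = 0$; a second induction on the number of $\circ$-factors in $q$ extends this to $p^m q = 0$. Consequently $p^m a = p^m P^k$, and so
\[ P \ast p^m a = p^m(P \ast P^k) = k\, p^m(P \ast P) \]
by left-distributivity and Corollary \ref{np2pp}.

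The main obstacle is to establish $p^m(P \ast P) = 0$, which is strictly stronger than the identity $p^m P \ast (P \ast P) = 0$ of Proposition \ref{prop2} (the latter only says that $p^m(P \ast P)$ lies in the left $\ast$-kernel of $P$). My plan here is to exploit the preceding proposition $p^m P^{-1} = -p^m P$ together with the identity $P \ast P^{-1} = -P - P^{-1}$ that follows from $P \circ P^{-1} = 0$. Multiplying the latter by $p^m$ and applying the former gives $p^m(P \ast P^{-1}) = 0$. At the same time, $p^m(P + P^{-1}) = 0$ by that same proposition, and so left-distributivity yields
\[ 0 = P \ast p^m(P + P^{-1}) = p^m(P \ast P) + p^m(P \ast P^{-1}), \]
whence $p^m(P \ast P) = 0$, and the lemma follows.
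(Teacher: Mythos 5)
Your proof is correct and follows the same overall strategy as the paper's: reduce to $p^m a = p^m P^k$ using the decomposition of assumption 4, pass to $k\,p^m(P\ast P)$ via Corollary \ref{np2pp}, and then kill $p^m(P\ast P)$ using the proposition $p^mP^{-1}=-p^mP$. The only real difference is in the packaging of the last step: you evaluate $P\ast p^m(P+P^{-1})$ in two ways, while the paper evaluates $p^m(P\ast P^{p^m-1})$ in two ways after writing $P^{p^m-1}=P^{p^m}\circ P^{-1}$; both rest on exactly the two facts $P\ast P^{-1}=-P-P^{-1}$ and $p^mP^{-1}=-p^mP$, and your version is if anything a little more direct. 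One minor remark: your claim $p^mQ_j=0$ reads the order in assumption 3 additively, which is justified here only because additive and multiplicative orders coincide for these braces (Bachiller's result, invoked earlier in the paper); it would be worth making that dependence explicit.
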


\begin{proof}
Let $a\in A$. Then we can write $a=P^{n} \circ \prod_{\circ}Q_i^{a_i}$, where  $n,a_i\in \Z$ and  the $Q_i$ in the product appear in some order. Then, as before,
\[p^m a = p^m P^{n}.\]
We have
\[P\ast p^m P^n = np^m P\ast P.\]
Consider $p^m(P\ast P^{p^m-1}).$ We have
\begin{align*}
    p^m(P\ast P^{p^m-1}) &= -p^m (P\ast P)\\
    &= p^m(P\ast (P^{p^m}\circ P^{-1}))\\
    &= p^m(P\ast (P^{-1}\ast P^{p^m})+P\ast P^{-1}+P\ast P^{p^m})\\
    &=-p^m P-p^mP^{-1}\\
    &=0.
\end{align*}
\end{proof}

This lemma finishes the proof of Theorem \ref{thm}.

Given a brace $(A,+,\circ)$, we define a map $\lambda$ as
\[ \lambda_{a}(b)=a*b+b=a\circ b-a  \]
for $a,b\in A$.
A known property of $\lambda$ is that
\[\lambda _{a\circ b}(c)=\lambda _{a}\left(\lambda _{b}(c)\right).\]
From this property it follows that commutators, i.e. elements of the form $a^{-1}\circ b^{-1}\circ a\circ b$ for $a,b\in A$, are in $A^2$. 

Theorem \ref{thm} requires the element $P^{p^m}$ to be in $A^2$. We will achieve this condition in the braces we will consider by choosing $P^{p^m}$ that is a commutator.

\begin{comment}

\begin{lemma}\label{2} 
		Let $A$ be a brace of cardinality $p^{4}$. Then for $a,b,c\in A$ we have 
		%\[(b^{-1}\circ a\circ b)*c=a*c+a*(b*c)-b*(a*c)-b*(a*(b*c))+b*(b*(a*c)),\]
		\begin{align*}
			\left(a^{-1}\circ b^{-1}\circ a\circ b\right)*c&=a*\left(b*c\right)-b*\left(a*c\right)\\
			&+a*\left(b*\left(a*c\right)\right)+b*\left(b*\left(a*c\right)\right)\\
			&-b*\left(a*\left(b*c\right)\right)-a*\left(a*\left(b*c\right)\right).
		\end{align*}
	\end{lemma}

\end{comment}

\section{$C_{p^2}\times C_{p^2}$}
We suppose $(A,+)\cong C_{p^2}\times C_{p^2}$. As aforementioned, the  multiplicative orders of elements equal the additive. Following the classification of groups of order $p^4$ in \cite{burnside1911theory}, the non-abelian groups with elements of order $p^2$, but no elements of order $p^3$ are generated by elements in $\{P,Q,R\}$ such that

\begin{itemize}
    \item \textbf{(VII)} $P^{p^2}=Q^p=R^p=1, \;\; PQ=QP,\;\;  PR=RP,\;\; R^{-1}QR=QP^p$ or
    \item \textbf{(VIII)} $P^{p^2}=Q^{p^2}=1, \;\; Q^{-1}PQ=P^{1+p}$ or
    \item \textbf{(IX)} $P^{p^2}=Q^p=R^p=1, \;\; PQ=QP, QR=RQ,\; \; R^{-1}PR=P^{1+p} $ or
    \item \textbf{(X)} $P^{p^2}=Q^p=R^p=1, \;\; R^{-1}PR=PQ,\;\; PQ=QP,\;\; QR=RQ$ or
    \item $P^{p^2}=Q^p=R^p=1, \;\; Q^{-1}PQ=P^{1+p},\;\;  R^{-1}PR=PQ,\;\; R^{-1}QR=P^{\alpha p}Q,$ where
    \begin{itemize}
        \item \textbf{(XI)} $\alpha = 0$ or
        \item \textbf{(XII)} $\alpha =1$ or
        \item \textbf{(XIII)} $\alpha =$ any non-residue $\mod p$.
    \end{itemize}
\end{itemize}

We will refer to the different groups by the roman numerals corresponding to them above. The groups above are the options for the multiplicative group $(A,\circ)$. 

\subsection{VII}
Suppose $(A,\circ)$ is the group VII given by the relations
 \[P^{p^2}=Q^p=R^p=1, \;\; P\circ Q=Q\circ P,\;\;  P\circ R=R\circ P,\;\; R^{-1}\circ Q\circ R=Q\circ P^p.\]
 Then $P\in Z(A)$, which implies that $P^k\in Z(A)$ for all $k\in \Z$. In particular, $P^p\in Z(A)$ and $P^p\in pA.$ Also, notice that as $P^p$ is a commutator, it is in $A^2$.

Notice now that $P^{p}\in p A$, so to show $A$ is right nilpotent it suffices to prove that $A\ast p A = 0$. As any element of $A$ is a product of $P$s, $Q$s and $R$s, it suffices to prove $Q\ast p A=0$, $R\ast pA = 0$ and $P\ast p A = 0.$

 \begin{prop}
 $Q\ast pA = R\ast pA =0.$
 \end{prop}
\begin{proof}
Let $a\in A$. Then we can write $a = P^{a_1}\circ R^{a_2}\circ Q^{a_3}$ for some $a_i\in \Z$. Then we have $p a = pP^{a_1}$. It follows that
\begin{align*}
    Q\ast pa & = Q\ast p P^{a_1}\\
    &= P^{a_1}\ast pQ\\
    &=0,
\end{align*}
and 
\begin{align*}
    R\ast pa & = R\ast p P^{a_1}\\
    &= R^{a_1}\ast pQ\\
    &=0.
\end{align*}
\end{proof}

\begin{cor}
$P\ast p A = 0.$
\end{cor}
\begin{proof}
Follows from Theorem \ref{thm}.
\end{proof}

\subsection{IX}
Suppose $(A,\circ)$ is the group IX given by the relations
 \[P^{p^2}=Q^p=R^p=1, \;\; P\circ Q=Q\circ P,\;\;  Q\circ R=R\circ Q,\;\; R^{-1}\circ P\circ R= P^{1+p}.\]
 Then $Q\in Z(A)$. We have
 \[R^{-1}\circ P^p\circ R = P^p,\]
 so $P^p\in Z(A)$. Now notice that as
 \[P\circ R = R\circ P^{1+p}, \]
 it follows that
 \[R\circ P^k = P^{k(1-p)}\circ R,\]
 for any $k\in \Z$. Also, notice that as $P^p$ is a commutator, it is in $A^2$.

 Notice now that $P^{p}\in p A$, so to show $A$ is right nilpotent it suffices to prove that $A\ast p A = 0$. As any element of $A$ is a product of $P$s, $R$s and $Q$s, it suffices to prove $Q\ast p A=0$, $R\ast pA=0$ and $P\ast p A = 0.$
 
 \begin{prop}
 $Q\ast pA = R\ast pA =0.$
 \end{prop}
\begin{proof}
Let $a\in A$. Then we can write $a = P^{a_1}\circ R^{a_2}\circ Q^{a_3}$ for some $a_i\in \Z$. Then we have $p a = pP^{a_1}$. It follows that
\begin{align*}
    Q\ast pa & = Q\ast p P^{a_1}\\
    &= P^{a_1}\ast pQ\\
    &=0,
\end{align*}
and 
\begin{align*}
    R\ast pa & = R\ast p P^{a_1}\\
    &= R^{a_1}\ast pQ\\
    &=0.
\end{align*}
\end{proof}

\begin{cor}
$P\ast p A = 0.$
\end{cor}
\begin{proof}
Follows from Theorem \ref{thm}.
\end{proof}

\subsection{XI, XII, XIII}
Suppose $(A,\circ)$ is the group given by the relations

\begin{align*}
    P^{p^2}=Q^p=R^p=1, \;\; &Q^{-1}\circ P\circ Q= P^{1+p},\;\;  R^{-1}\circ P\circ R=P\circ Q,\;\; \\
    &R^{-1}\circ Q\circ R= P^{\alpha p}\circ Q,
\end{align*} 
where $\alpha = 0$ in the group XI, $\alpha =1$ in the group XII and $\alpha$ is a non-residue $\mod p$ in the group XIII.
 We have 
 \begin{align*}
      P^n\circ Q &= P^{n-1}\circ P\circ Q\\
      &= P^{n-1}\circ Q\circ P^{1+p}\\
      &=\dots\\
      &=Q\circ P^{(1+p)n}.
 \end{align*}
 Hence
 \begin{align*}
     R^{-1}\circ P^p \circ R &= (P\circ Q)^p = P\circ Q\circ \dots \circ P\circ Q\\
     &=Q\circ P^{1+p}\circ Q\circ P^{1+p}\circ \dots \circ Q\circ P^{1+p}\\
     &=Q^2\circ P^{(1+p)^2+1+p}\circ Q\circ \dots \circ Q\circ P^{1+p}\\
     &=Q^3\circ P^{(1+p)^3+(1+p)^2+1+p}\circ Q\circ \dots \circ Q\circ P^{1+p}\\
     &=\dots\\
     &=Q^p\circ P^{(1+p)^p+(1+p)^{p-1}+\dots +1+p}.
 \end{align*}
 We have
 \begin{align*}
     P^{(1+p)^p+(1+p)^{p-1}+\dots +1+p} &= P^{1+p^2+1+(p-1)p+\dots +1+p}\\
     &=P^{p+p((p-1)+(p-2)+\dots +1)}\\
     &=P^{p+\frac{1}{2}p^2(p-1)}\\
     &=P^p.
 \end{align*}
 Hence
 \[R^{-1}\circ P^p \circ R = Q^p \circ P^p = P^p.\]
 Therefore $P^p\in Z(A).$ Also, notice that as $P^p$ is a commutator, it is in $A^2$.

 Notice now that $P^{p}\in p A$, so to show $A$ is right nilpotent it suffices to prove that $A\ast p A = 0$. As any element of $A$ is a product of $P$s, $R$s and $Q$s, it suffices to prove $Q\ast p A=0$, $R\ast pA=0$ and $P\ast p A = 0.$

 \begin{prop}
 $Q\ast pA = R\ast pA =0.$
 \end{prop}
 
 \begin{proof}
 Let $a\in A$. Then we can write $a = P^{k}\circ \prod_\circ Q_i^{a_i}$ for some $k,a_i\in \Z$, where $Q_j\in \{Q,R\}$ are in some order. Notice that we can write this since we have the relations
 \[R\circ P = P\circ R\circ Q^{-1},\quad Q\circ P = P^{1-p}\circ Q.\]
 Then we have $p a = pP^{k}$. It follows that
\begin{align*}
    Q\ast pa & = Q\ast p P^{k}\\
    &= P^{k}\ast pQ\\
    &=0,
\end{align*}
and 
\begin{align*}
    R\ast pa & = R\ast p P^{k}\\
    &= P^{k}\ast pR\\
    &=0.
\end{align*}
 \end{proof}

\begin{cor}
$P\ast p A = 0.$
\end{cor}
\begin{proof}
Follows from Theorem \ref{thm}.
\end{proof}

\subsection{VIII}
Suppose $(A,\circ)$ is the group VIII given by the relations
 \[P^{p^2}=Q^{p^2}=1, \;\; Q^{-1}\circ P\circ Q= P^{1+p}.\]
 Then we have
 \[P^p=Q^{-1}\circ P^p\circ Q,\]
 so $P^p\in Z(A).$

\begin{prop}
$Q^p\in Z(A).$
\end{prop}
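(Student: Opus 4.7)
The plan is to show that $Q^p$ commutes with every element of $A$ under $\circ$, which is equivalent to $Q^p \in Z(A)$ as defined in the paper: from $c \circ a = c \ast a + c + a$ and the fact that $(A,+)$ is abelian, one sees that $c \circ a = a \circ c$ if and only if $c \ast a = a \ast c$. Since $P$ and $Q$ generate $(A,\circ)$ and $Q^p$ trivially commutes with $Q$, the entire task reduces to checking that $P \circ Q^p = Q^p \circ P$.

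To this end, I would first rewrite the defining relation as $P \circ Q = Q \circ P^{1+p}$, and then iterate on the exponent of $P$ to obtain $P^n \circ Q = Q \circ P^{n(1+p)}$ by induction on $n$ (exactly the same style of manipulation the paper carries out for groups XI--XIII). Iterating this a second time, now on the exponent of $Q$, yields the two-variable formula $P^n \circ Q^k = Q^k \circ P^{n(1+p)^k}$ by induction on $k$. Specializing to $n = 1$ and $k = p$ gives
$$P \circ Q^p = Q^p \circ P^{(1+p)^p}.$$

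The last step is to evaluate $(1+p)^p$ modulo $p^2$, which is needed because $P$ has order $p^2$. By the binomial theorem,
$$(1+p)^p = 1 + p \cdot p + \binom{p}{2} p^2 + \cdots \equiv 1 \pmod{p^2},$$
so $P^{(1+p)^p} = P$, and therefore $P \circ Q^p = Q^p \circ P$. There is no genuine obstacle here: the proof is a short conjugation computation, and the only care required is to do the arithmetic modulo $p^2$ (not modulo $p$) and to iterate the commutation formula in the correct variable.
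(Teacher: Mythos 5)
Your proof is correct and is in essence the same as the paper's: both reduce centrality of $Q^p$ to a conjugation computation with $P$ in the group $(A,\circ)$ using the relation $P\circ Q=Q\circ P^{1+p}$. The only cosmetic difference is that the paper computes $P\circ Q^p\circ P^{-1}=(Q\circ P^p)^p=Q^p$ using the already-established centrality of $P^p$, while you compute $(1+p)^p\equiv 1\pmod{p^2}$ directly via the binomial theorem.
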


\begin{proof}
From the group relations we have
\begin{align*}
    P\circ Q^{p}\circ P^{-1}&=(Q\circ P^p)^p\\
    &=Q^p\circ P^{p^2}\\
    &=Q^p,
\end{align*}
where we used the fact that $P^p\in Z(A).$ 
\end{proof}

\begin{prop}\label{nilp}
$A$ is right nilpotent.
\end{prop}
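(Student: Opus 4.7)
The strategy, as in each preceding subsection, is to produce a central element $c\in A^2$ satisfying $A\ast c=0$; the remark in the introduction then implies right nilpotency. The natural candidate is $c=P^p$: it is central by the proposition above stating $P^p\in Z(A)$, and it lies in $A^2$ as the commutator $P^p=P^{-1}\circ Q^{-1}\circ P\circ Q$ arising from the relation $Q^{-1}\circ P\circ Q=P^{1+p}$.

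The new difficulty relative to subsections VII, IX, XI, XII, XIII is that Theorem~\ref{thm} cannot be applied directly: its hypothesis~(3) would require the only non-$P$ generator $Q$ to have order at most $p^m=p$, whereas here $Q$ has order $p^2$. Writing a general $a\in A$ as $a=P^k\circ Q^l$, we get
\[ pa=p(P^k\ast Q^l)+pP^k+pQ^l, \]
rather than the simpler $pa=pP^k$ that drove the previous subsections. To compensate I would exploit the additional central element $Q^p$, central by the preceding proposition: $Q^p\in Z(A)$ gives the summand $pQ^l$ the same good $\ast$-behaviour as $pP^k$, while a separate step of showing $pA^2=0$ would kill the middle summand $p(P^k\ast Q^l)$.

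With $pA^2=0$ and the centrality of both $P^p$ and $Q^p$ in place, the chain of Propositions~\ref{PPn}--\ref{prop2} and the concluding lemma used to prove Theorem~\ref{thm} should adapt almost verbatim, with $P$ and $Q$ playing symmetric roles, to yield $P\ast pa=0$ and $Q\ast pa=0$ for every $a\in A$, and in particular $A\ast P^p=0$. Together with $P^p\in Z(A)\cap A^2$, this closes the argument via the remark in the introduction.

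The main obstacle will be the auxiliary claim $pA^2=0$, equivalently $A^2\subseteq pA$ in the additive group $C_{p^2}\times C_{p^2}$. This does not come for free from the earlier results; I expect it to follow by applying the unconditional Proposition~\ref{PPn} both with $n=p$ and with $n=p^2$ (using $P^{p^2}=Q^{p^2}=1$) to extract vanishings for $p(P\ast P)$ and $p(Q\ast Q)$, and controlling the mixed products $P\ast Q$ and $Q\ast P$ via the commutator relation $Q^{-1}\circ P\circ Q=P^{1+p}$.
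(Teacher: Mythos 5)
There is a genuine gap, and it sits exactly where you locate the ``main obstacle.'' Your argument funnels through the auxiliary claim $pA^2=0$, but since $a\ast(pb)=p(a\ast b)$ for all $a,b\in A$, the subgroup $A\ast pA$ coincides with $pA^2$; so $pA^2=0$ is not an auxiliary step but is literally equivalent to the statement $A\ast pA=0$ you are trying to establish (and it subsumes $A\ast P^p=0$, since $P^p\in pA$). Once you had $pA^2=0$ you would be done instantly --- $P\ast pa=p(P\ast a)\in pA^2=0$ --- and no adaptation of Propositions \ref{PPn}--\ref{prop2} would be needed; without it, nothing in your outline gets off the ground. The proposed route to $pA^2=0$ also fails: applying Proposition \ref{PPn} with $n=p^2$ is vacuous, because every coefficient on the right-hand side is divisible by $p^2$ and the additive group has exponent $p^2$; with $n=p$ it only yields $p(P\ast P)\in A^3$ (exactly as in Proposition \ref{prop1}), not $p(P\ast P)=0$; and the control of the mixed products $P\ast Q$, $Q\ast P$ is not supplied.

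More importantly, the paper does not prove $A\ast pA=0$ for type VIII, and its proof is structured precisely to avoid having to: it explicitly allows $A\ast pA\neq 0$ and handles that case by a softer counting argument. Since $P^p$ and $Q^p$ have multiplicative, hence additive, order $p$, they lie in $pA$; they generate a central subgroup of order $p^2$, and $\abs{pA}=p^2$ forces $pA\subseteq Z(A)$. One then examines the chain $pA\supseteq A\ast pA\supseteq A\ast(A\ast pA)$: each containment is strict unless the smaller term is already $0$, because equality would place that term in $A^n$ for every $n$, contradicting left nilpotency of braces of $p$-power order. Hence either $A\ast pA=0$ (take $c=P^p$), or $\abs{A\ast pA}=p$ and $A\ast(A\ast pA)=0$, in which case any nonzero $c\in A\ast pA\subseteq Z(A)$ satisfies $A\ast c=0$. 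Either way a central $c$ with $A\ast c=0$ exists --- but it need not be $P^p$. To repair your proof you must either establish $A\ast pA=0$ outright (which the paper does not do and which may fail), or abandon the fixed candidate $P^p$ in favour of one extracted from this descending chain.
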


\begin{proof}

We let $S:=\langle Q^p, P^p\rangle$, the group generated by $Q^p$ and $P^p$ under the $\circ$ operation. Hence $S\subseteq Z(A)$ is an abelian group of order $p^2$. Now, as $Q^p,P^p\in pA$, we have $S\subseteq pA$.
As $(A,+)\cong C_{p^2}\times C_{p^2}$, $\abs{pA}=p^2$ and $S=pA\subseteq Z(A)$.

If  $A\ast pA=0$, then we are done.
\begin{comment}
 Let $x\in pA$ and consider the ideal $I:=\langle x\rangle.$ Now notice that $A/I$ has cardinality $p^3$, and by \cite{bachiller2016counterexample} this implies that $A/I$ is right nilpotent. It follows that there exists $i\in \Z$ such that $(A/I)^{(i)}=0$. This implies that $A^{(i)}\subseteq I$. Then $A^{(i+1)}\subseteq A\ast I =  I\ast A = 0$ and $A$ is right nilpotent.
\end{comment}

Now suppose $A\ast pA\neq 0.$ Notice that $pA\neq A\ast pA$, as in that case we would have
\[A\ast pA = A\ast (A\ast (A\ast (A\ast pA)))=0.\]
It follows that $\abs{A\ast pA}=p$, hence $A\ast pA\cong C_p$. Similarly, note that $ A\ast (A\ast pA)\neq A\ast pA$, so $\abs{A\ast (A\ast pA)}=1$. Hence for any element $c\in A\ast pA$, we have $A\ast c=0$. Hence $A$ is right nilpotent.

\end{proof}

\subsection{X}
Suppose $(A,\circ)$ is the group X given by the relations
 \[P^{p^2}=Q^p=R^p=1, \;\; P\circ Q=Q\circ P,\;\;  Q\circ R=R\circ Q,\;\; R^{-1}\circ P\circ R= P\circ Q.\]
 Then $Q\in Z(A)$. We have
 \[R^{-1}\circ P^p\circ R = (Q\circ P)^p = Q^p\circ P^p = P^p,\]
 so $P^p\in Z(A)$.
 
\begin{prop}
$A$ is right nilpotent.
\end{prop}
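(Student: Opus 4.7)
The plan is to mirror the proof of Proposition \ref{nilp} rather than invoke Theorem \ref{thm}: since $P^{p}$ is not a commutator here (in group X the commutator subgroup of $(A,\circ)$ turns out to be $\langle Q\rangle_\circ$), it is not clear that $P^{p}\in A^{2}$, so condition (2) of Theorem \ref{thm} is not obviously available. Instead we show $pA\subseteq Z(A)$ and close with the same descending-chain dichotomy as in case VIII. The central generators for $pA$ will be $P^{p}$ and $Q$, rather than $P^{p}$ and $Q^{p}$ as in case VIII, since here $Q$ itself already has order $p$.

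First I would identify $pA$ with a central multiplicative subgroup of order $p^{2}$. Both $P^{p}$ (just computed to be central) and $Q$ (central by the relations $P\circ Q=Q\circ P$ and $Q\circ R=R\circ Q$) have multiplicative order $p$, so by Bachiller's theorem \cite{bachiller2016counterexample} their additive orders are $p$ as well and both lie in $pA$. Because $(A,\circ)$ is the group X of order $p^{4}$ with $P,Q,R$ as independent generators, we have $\langle P\rangle_\circ\cap\langle Q\rangle_\circ=\{1\}$, so the multiplicative subgroup $S:=\langle P^{p},Q\rangle_\circ$ is abelian of order $p^{2}$ and is contained in $Z(A)$. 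Every element of $S$ has multiplicative order dividing $p$, hence (again by Bachiller) additive order dividing $p$, so $S\subseteq pA$. Since $(A,+)\cong C_{p^{2}}\times C_{p^{2}}$ gives $|pA|=p^{2}=|S|$, we conclude $S=pA$ as subsets of $A$, and therefore $pA\subseteq Z(A)$.

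The rest of the argument is a direct transcription of the proof of Proposition \ref{nilp}. If $A\ast pA=0$, then $P^{p}\in pA\cap Z(A)$ is a central element with $A\ast P^{p}=0$, and by the criterion recalled in the preamble $A$ is right nilpotent. Otherwise $A\ast pA\neq 0$, and the strict containment $A\ast pA\subsetneq pA$ must hold (as equality would give $A\ast pA=A\ast(A\ast(A\ast(A\ast pA)))=0$, a contradiction), so $|A\ast pA|\leq p$; one further iteration gives $A\ast(A\ast pA)=0$. Any nonzero $c\in A\ast pA\subseteq pA\subseteq Z(A)$ is then a central element with $A\ast c=0$, so $A$ is right nilpotent.

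The main obstacle is the set equality $S=pA$, since $S$ is a priori a multiplicative subgroup and $pA$ is an additive one; the bridge is Bachiller's theorem, which converts the multiplicative-order-$p$ condition on elements of $S$ into membership in the additive $p$-torsion subgroup of $(A,+)$. Once this identification is in place, the descending-chain dichotomy goes through verbatim as in Proposition \ref{nilp}.
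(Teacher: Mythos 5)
Your proof is correct and follows essentially the same route as the paper: the paper likewise sets $S=\langle Q,P^p\rangle_\circ\subseteq Z(A)$, notes $Q,P^p\in pA$ so that $S=pA\subseteq Z(A)$ by the order count in $C_{p^2}\times C_{p^2}$, and then repeats the dichotomy argument of Proposition \ref{nilp}. Your additional justifications (Bachiller's order-matching to place $S$ inside $pA$, and the trivial intersection $\langle P\rangle_\circ\cap\langle Q\rangle_\circ=\{1\}$ to get $|S|=p^2$) simply make explicit what the paper leaves implicit.
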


\begin{proof}

We let $S:=\langle Q, P^p\rangle$, the group generated by $Q$ and $P^p$ under the $\circ$ operation. Hence $S\subseteq Z(A)$ is an abelian group of order $p^2$.

Now, as $Q,P^p\in pA$, we have $S\subseteq pA$.
As $(A,+)\cong C_{p^2}\times C_{p^2}$, $\abs{pA}=p^2$ and $S=pA\subseteq Z(A)$.

We now proceed as in Proposition \ref{nilp}.

\end{proof}

\section{$C_p\times C_{p^3}$}

In this section we characterise the right nilpotency of braces of cardinality $p^4$ with additive group $C_p\times C_{p^3}$.

Suppose the brace $(A,+,\circ)$ has $(A,+)\cong C_p\times C_{p^3}$. This implies that $(A,\circ)$ has an element of order $p^3.$ Now, by \cite{burnside1911theory} the only non-abelian group of order $p^4$ with an element of order $p^3$ is defined by the relations
\[P^{p^3}=1,\;\; Q^p=1,\;\; Q^{-1}\circ P\circ Q = P^{1+p^2}.\]

%We have \[P^p=Q^{-1}\circ P^p\circ Q.\]
%Now \cite[Proposition 8]{rump2007braces} implies that $A^5=0$.

Notice that $P^{p^2}\in Z(A)$. Now, as $P^{p^2}$ is a commutator we have $P^{p^2}\in A^2 $.

 Notice now that $P^{p^2}\in p^2 A$, so to show $A$ is right nilpotent it suffices to prove that $A\ast p^2 A = 0$. As any element of $A$ is a product of $P$s and $Q$s, it suffices to prove $Q\ast p^2 A=0$, and $P\ast p^2 A = 0.$

\begin{prop}
$Q\ast p^2 A = P\ast p^2 A=0.$
\end{prop}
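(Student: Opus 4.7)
The plan is to mirror the earlier subsections. I would write an arbitrary $a \in A$ as $a = P^{k} \circ Q^{l}$, reduce $p^{2}a$ to $p^{2}P^{k}$, and then dispose of the two cases $P \ast p^{2}P^{k}$ and $Q \ast p^{2}P^{k}$ separately; the first will follow directly from Theorem~\ref{thm}, while the second will take a short direct computation.

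For the reduction, expand $a = P^{k} \ast Q^{l} + P^{k} + Q^{l}$ and multiply by $p^{2}$: the term $p^{2}Q^{l}$ vanishes since $Q$ has additive order $p$, and by right linearity $p^{2}(P^{k} \ast Q^{l}) = P^{k} \ast (p^{2}Q^{l}) = 0$, leaving $p^{2}a = p^{2}P^{k}$. Then $P \ast p^{2}P^{k} = 0$ is immediate from Theorem~\ref{thm} with $m = 2$ once I verify the hypotheses: $P^{p^{2}}$ is central and a commutator (hence in $A^{2}$), $\operatorname{ord}(Q) = p \le p^{2}$, and $(A, \circ)$ admits the normal form $P^{k} \circ Q^{l}$.

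The substantive step is $Q \ast p^{2}P^{k} = 0$. I would iterate $Q \circ P = P^{1-p^{2}} \circ Q$ (which comes from inverting the defining relation $Q^{-1} \circ P \circ Q = P^{1+p^{2}}$) to obtain $Q \circ P^{k} = P^{k(1-p^{2})} \circ Q$, giving
\[Q \ast P^{k} = P^{k(1-p^{2})} \ast Q + P^{k(1-p^{2})} - P^{k}.\]
By right linearity $Q \ast p^{2}P^{k} = p^{2}(Q \ast P^{k})$, and multiplying through kills the first summand because $p^{2}Q = 0$, so it suffices to establish $p^{2}P^{k(1-p^{2})} = p^{2}P^{k}$. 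I plan to get this by writing $P^{k(1-p^{2})} = P^{k} \circ P^{-kp^{2}}$, using the centrality of $P^{p^{2}}$ together with Proposition~\ref{prop2} to identify $P^{-kp^{2}} = -kP^{p^{2}} = -kp^{2}P$, and observing that $p^{2} \cdot (-kp^{2}P) = -kp^{4}P = 0$ because $P$ has additive order $p^{3}$.

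The main obstacle is precisely this last cancellation. Unlike in subsections VII, IX, and XI--XIII, here $P$ and $Q$ do not commute in a way that lets one swap $Q \ast p^{2}P^{k}$ directly with $P^{k} \ast p^{2}Q$; the non-trivial conjugation introduces an error term $p^{2}P^{k(1-p^{2})} - p^{2}P^{k}$ that only collapses because $P$ has additive order $p^{3}$ (larger than it would be under the $C_{p^{2}} \times C_{p^{2}}$ additive structure of the previous section), and this extra room in the additive group is what makes the argument close.
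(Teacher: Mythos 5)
Your proposal is correct and follows essentially the same route as the paper: reduce $p^2a$ to $p^2P^{k}$, invoke Theorem~\ref{thm} for $P\ast p^2P^{k}=0$, and handle $Q\ast p^2P^{k}$ via the conjugation relation $Q\circ P^{k}=P^{k(1-p^2)}\circ Q$. In fact you spell out the final cancellation $p^2P^{k(1-p^2)}=p^2P^{k}$ (via $P^{-kp^2}=-kp^2P$ and the additive order $p^3$ of $P$), a step the paper's proof leaves implicit.
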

\begin{proof}

Let $a\in A$. Then we can write $a = P^{a_1}\circ Q^{a_2}$ for some $a_i\in \Z$. We have
\begin{align*}
    p^2a &= p^2(P^{a_1}\ast Q^{a_2} + P^{a_1}+Q^{a_2})\\
    &=p^2P^{a_1}.
\end{align*}
By Theorem \ref{thm} we have $P\ast p^2 P^{a_1}=0$. We have
\begin{align*}
    Q\ast p^2a &= Q\ast p^2P^{a_1}\\
    &=p^2(P^{a_1-a_1p^2}\ast Q -P^{a_1}+P^{a_1-a_1p^2})\\
    &=0.
\end{align*}

\end{proof}

\begin{center}
\subsubsection*{\sc{Acknowledgements}} 
\end{center}
The author is thankful to Agata Smoktunowicz for her invaluable comments and corrections.

\cleardoublepage

\addcontentsline{toc}{chapter}{Bibliography}
\bibliographystyle{acm}
\bibliography{bibliography}

\end{document}